\newtheorem{theorem}{Theorem}[section]
\newtheorem{proposition}[theorem]{Proposition}
\newtheorem{cor}[theorem]{Corollary}
\newtheorem{question}[theorem]{Question}
\theoremstyle{definition}
\theoremstyle{remark}
\numberwithin{equation}{section}
\begin{document}

\title[Maps with dimensionally restricted fibers]
{Maps with dimensionally restricted fibers}

\author[V. Valov]{Vesko  Valov}
\address{Department of Computer Science and Mathematics, Nipissing University,
100 College Drive, P.O. Box 5002, North Bay, ON, P1B 8L7, Canada}
\email{veskov@nipissingu.ca}
\thanks{The author was partially supported by NSERC Grant 261914-08.}

\keywords{extensional dimension, $C$-spaces, 0-dimensional maps,
metric compacta, weakly infinite-dimensional spaces}

\subjclass[2000]{Primary 54F45; Secondary 54E40}

\date{}


\begin{abstract}
We prove that if $f\colon X\to Y$ is a closed surjective map between
metric spaces such that every fiber $f^{-1}(y)$ belongs to a class
of space $\mathrm S$, then there exists an $F_\sigma$-set $A\subset
X$ such that $A\in\mathrm S$ and $\dim f^{-1}(y)\backslash A=0$ for
all $y\in Y$. Here, $\mathrm S$ can be one of the following classes:
(i) $\{M:\mathrm{e-dim}M\leq K\}$ for some $CW$-complex $K$; (ii)
$C$-spaces; (iii) weakly infinite-dimensional spaces. We also
establish that if $\mathrm S=\{M:\dim M\leq n\}$, then $\dim
f\triangle g\leq 0$ for almost all $g\in C(X,\mathbb I^{n+1})$.
\end{abstract}

\maketitle



\section{Introduction}
All spaces in the paper are assumed to be paracompact and all maps
continuous. By $C(X,M)$ we denote all maps from $X$ into $M$. Unless
stated otherwise, all function spaces are endowed with the source
limitation topology provided $M$ is a metric space.

The paper is inspired by the results of Pasynkov \cite{bp1},
Torunczyk \cite{to}, Sternfeld \cite{s} and Levin \cite{lev}.
Pasynkov announced in \cite{bp1} and proved in \cite{bp2} that if
$f\colon X\to Y$ is a surjective map with $\dim f\leq n$, where $X$
and $Y$ are finite-dimensional metric compacta, then $\dim
f\triangle g\leq 0$ for almost all maps $g\in C(X,\mathbb I^n)$ (see
\cite{bp} for a non-compact version of this result). Torunczyk
\cite{to} established (in a more general setting) that if $f$, $X$
and $Y$ are as in Pasynkov's theorem, then for each $0\leq k\leq
n-1$ there exists a $\sigma$-compact subset $A_k\subset X$ such that
$\dim A_k\leq k$ and $\dim f|(X\backslash A_k)\leq n-k-1$.

Next results in this direction were established by Sternfeld and
Levin. Sternfeld \cite{s} proved that if in the cited above results
$Y$ is not-necessarily finite-dimensional, then $\dim f\triangle
g\leq 1$ for almost all $g\in C(X,\mathbb I^n)$ and there exists a
$\sigma$-compact subset $A\subset X$ such that $\dim A\leq n-1$ and
$\dim f|(X\backslash A)\leq 1$. Levin \cite{lev} improved
Sternfeld's results by showing that $\dim f\triangle g\leq 0$ for
almost all $g\in C(X,\mathbb I^{n+1})$, and has shown that this is
equivalent to the existence of an $n$-dimensional $\sigma$-compact
subset $A\subset X$ with $\dim f|(X\backslash A)\leq 0$.

The above results of Pasynkov and Torunczyk were generalized in
\cite{tv} for closed maps between metric space $X$ and $Y$ with $Y$
being a $C$-space (recall that each finite-dimensional paracompact
is a $C$-space \cite{e}). But the question whether the results of
Pasynkov and Torunczyk remain valid without the
finite-dimensionality assumption on $Y$ is still open.

In this paper we provide non-compact analogues of Levin's results
for closed maps between metric spaces.

We say that a topological property of metrizable spaces is an {\em
$\mathrm S$-property} if the following conditions are satisfied: (i)
$\mathrm S$ is hereditary with respect to closed subsets; (ii) if
$X$ is metrizable and $\{H_i\}_{i=1}^\infty$ is a sequence of closed
$\mathrm S$-subsets of $X$, then $\bigcup_{i=1}^\infty H_i\in\mathrm
S$; (iii) a metrizable space $X\in\mathrm S$ provided there exists a
closed surjective map $f\colon X\to Y$ such that $Y$ is a
$0$-dimensional metrizable space and $f^{-1}(y)\in\mathrm S$ for all
$y\in Y$; (iv) any discrete union of $\mathrm S$-spaces is an
$\mathrm S$-space.

Any map whose fibers have a given $\mathrm S$-property is called an
{\em $\mathrm S$-map}.

Here are some examples of $\mathrm S$-properties (we identify
$\mathrm S$ with the class of spaces having the property $\mathrm
S$):
\begin{itemize}
\item $\mathrm S=\{X:\dim X\leq n\}$ for some $n\geq 0$;
\item $\mathrm S=\{X:\dim_G X\leq n\}$, where $G$ is an Abelian group
and $\dim_G$ is the cohomological dimension;
\item more generally, $\mathrm S=\{X:\mathrm{e-dim}
X\leq K\}$, where $K$ is a $CW$-complex and $\mathrm{e-dim}$ is the
extension dimension, see \cite{dr}, \cite{dd};
\item $\mathrm S=\{X: X\hbox{~}\mbox{is weakly infinite-dimensional}\}$;
\item $\mathrm S=\{X: X\hbox{~}\mbox{is a $C$-space}\}$.
\end{itemize}

To show that the property $\mathrm{e-dim}\leq K$ satisfies condition
(iii), we apply \cite[Corollary 2.5]{cv}. For weakly
infinite-dimensional spaces and $C$-spaces this follows from
\cite{hy}.

\begin{theorem}
Let $f\colon X\to Y$ be a closed surjective $\mathrm S$-map with $X$
and $Y$ being metrizable spaces. Then there exists an
$F_\sigma$-subset $A\subset X$ such that $A\in\mathrm S$ and $\dim
f^{-1}(y)\backslash A=0$ for all $y\in Y$. Moreover, if $f$ is a
perfect map, the conclusion remains true provided $\mathrm S$ is a
property satisfying conditions $(i) - (iii)$.
\end{theorem}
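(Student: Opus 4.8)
The plan is to reduce the theorem to the case of a perfect map, and there to construct the required set by deleting the "boundary sets" of finer and finer open covers of the total space.

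\emph{Reduction to perfect maps.} Put $U=\bigcup_{y\in Y}\operatorname{int}_X f^{-1}(y)$. The sets $\operatorname{int}_X f^{-1}(y)$ are pairwise disjoint and open, so $\{\operatorname{int}_X f^{-1}(y)\}_{y\in Y}$ is a $\sigma$-discrete family in the metrizable space $X$; each member is an open, hence $F_\sigma$, subset of the $\mathrm S$-space $f^{-1}(y)$, so it belongs to $\mathrm S$ by $(i)$ and $(ii)$, and therefore $U\in\mathrm S$ by $(iv)$ and $(ii)$. Let $X'=X\setminus U$; it is closed in $X$, the restriction $f|X'\colon X'\to Y$ is closed, and by Vainstein's lemma $X'\cap f^{-1}(y)=\operatorname{Bd}_X f^{-1}(y)$ is compact for every $y$, so $f|X'$ is a perfect $\mathrm S$-map (its fibers are closed subsets of fibers of $f$). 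Granting the perfect case, pick an $F_\sigma$-set $A'\subseteq X'$ with $A'\in\mathrm S$ and $\dim\bigl(\operatorname{Bd}_X f^{-1}(y)\setminus A'\bigr)=0$ for all $y$, and set $A=A'\cup U$. Then $A$ is $F_\sigma$, $A\in\mathrm S$ by $(ii)$, and $f^{-1}(y)\setminus A=\operatorname{Bd}_X f^{-1}(y)\setminus A'$ is $0$-dimensional for every $y$. Since $(iv)$ enters only here, it may be dropped when $f$ is already perfect, as claimed.

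\emph{The perfect case.} Assume $f$ perfect and fix compatible metrics on $X$ and $Y$. For each $n$ choose a locally finite open cover $\mathcal U_n$ of $Y$ of mesh $<1/n$ and a locally finite open refinement $\mathcal W_n$ of $f^{-1}(\mathcal U_n)$ of mesh $<1/n$ in $X$; then every $W\in\mathcal W_n$ has $\operatorname{diam} W<1/n$ and $\operatorname{diam} f(W)<1/n$. Fix a closed shrinking $\{F_W\}_{W\in\mathcal W_n}$ of $\mathcal W_n$, put $A_n=\bigcup_{W\in\mathcal W_n}\operatorname{Bd}_X F_W$ (closed in $X$ by local finiteness), and let $A=\bigcup_{n\geq 1}A_n$, $G=X\setminus A$. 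Fix $y$: if $x\in f^{-1}(y)\cap G$ and $x\in F_W$, then $x\in\operatorname{int}_X F_W$ and $\operatorname{int}_X F_W\cap G=F_W\cap G$ is relatively clopen in $G$ of diameter $<1/n$; hence $f^{-1}(y)\cap G$ has, for each $n$, a relatively clopen cover by sets of diameter $<1/n$. Thus every point of $f^{-1}(y)\cap G$ has a base of relatively clopen neighborhoods, so $\dim\bigl(f^{-1}(y)\setminus A\bigr)=\dim\bigl(f^{-1}(y)\cap G\bigr)=0$; it remains only to show $A\in\mathrm S$.

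\emph{The main obstacle: $A\in\mathrm S$.} By $(ii)$ it suffices that $A_n\in\mathrm S$ for each $n$; splitting the locally finite family $\mathcal W_n$ into countably many discrete subfamilies and using $(iv)$ and $(ii)$, it suffices that $\operatorname{Bd}_X F_W\in\mathrm S$ for a single $W$. On $\operatorname{Bd}_X F_W$ the map $f$ restricts to a perfect $\mathrm S$-map whose image has diameter $<1/n$ in $Y$, and the crux is to squeeze $\mathrm S$-membership out of this through $(iii)$ — concretely, through the results cited to establish $(iii)$, namely \cite[Corollary~2.5]{cv} for $\mathrm{e-dim}\leq K$ and \cite{hy} for $C$-spaces and weakly infinite-dimensional spaces. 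This is precisely the delicate point: the crude observation that $f|A$ is an $\mathrm S$-map says only that $A$ is built from $\mathrm S$-pieces fibered over the (possibly infinite-dimensional) space $Y$, which does not yield $A\in\mathrm S$. The way around it should be to exploit that the pieces assembling $A$ have been chosen with $f$-images of vanishing diameter, so that, after a further refinement of the covers $\mathcal W_n$, each such piece can be presented as a closed preimage over a $0$-dimensional metrizable space with fibers in $\mathrm S$, whence $(iii)$ applies; axioms $(i)$, $(ii)$, $(iv)$ then reassemble the pieces into $A\in\mathrm S$. I expect this step — converting "small $f$-image" into "$0$-dimensional base" while keeping the simultaneous control on $G$ — to be the heart of the argument.
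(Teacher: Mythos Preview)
Your reduction to the perfect case matches the paper's (modulo the minor point that $f|X'$ need not hit $y$ when $f^{-1}(y)$ is open; the paper adds one point from each such fiber). The genuine gap is in the perfect case. Your construction does make $G=X\setminus A$ zero-dimensional, but you have no way to show $A\in\mathrm S$, and your proposed fix does not work: ``small $f$-image'' cannot be converted into ``$0$-dimensional base'', because $f(\operatorname{Bd}_X F_W)$ is merely a small-diameter subset of $Y$, and when $Y$ is, say, the Hilbert cube, no refinement of the covers makes such images $0$-dimensional. Condition~(iii) simply does not apply to your pieces $\operatorname{Bd}_X F_W$. Note also that your scheme invokes~(iv) in the perfect case, whereas the theorem promises that only (i)--(iii) are needed there---another sign the approach is off track.

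The paper manufactures the required $0$-dimensional base not in $Y$ but in $Y\times\mathbb I$. Since $f$ is perfect with separable metrizable fibers, there is $g=\triangle_{i} g_i\colon X\to\mathbb I^{\aleph_0}$ embedding every fiber; put $h_i=f\triangle g_i\colon X\to Y\times\mathbb I$. Fix closed subintervals $\{\mathbb I_j\}$ so that every open subset of $\mathbb I$ contains some $\mathbb I_j$. The key external ingredient is \cite[Lemma~4.1]{tv}: each $Y\times\mathbb I_j$ contains a $0$-dimensional $F_\sigma$-set $C_j$ meeting every slice $\{y\}\times\mathbb I_j$. Then $A_{ij}=h_i^{-1}(C_j)$ is $F_\sigma$, and $h_i$ restricts on it to a closed $\mathrm S$-map into the $0$-dimensional metrizable space $C_j$, so $A_{ij}\in\mathrm S$ by (iii) and (ii); hence $A=\bigcup_{i,j}A_{ij}\in\mathrm S$. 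If $\dim(f^{-1}(y_0)\setminus A)>0$, then since $g$ embeds $f^{-1}(y_0)$ some $g_i(f^{-1}(y_0)\setminus A)$ contains an $\mathbb I_j$; choosing $t_0$ with $(y_0,t_0)\in C_j$ produces a point of $f^{-1}(y_0)\setminus A$ lying in $A_{ij}\subset A$, a contradiction. The extra interval factor is precisely the missing idea: it replaces the intractable base $Y$ by a $0$-dimensional one.
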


Theorem 1.1 was established by Levin \cite[Theorem 1.2]{l2} in the
case $X$ and $Y$ are metric compacta and $\mathrm S$ is the property
$\mathrm{e-dim}\leq K$ for a given $CW$-complex $K$. Levin's proof
of this theorem remains valid for arbitrary $\mathrm S$-property,
but it doesn't work for non-compact spaces.

We say that a map $f\colon X\to Y$ has a countable functional weight
(notation $W(f)\leq\aleph_0$), see \cite{bp}) if there exists a map
$g\colon X\to\mathbb I^{\aleph_0}$ such that $f\triangle g\colon
X\to Y\times\mathbb I^{\aleph_0}$ is an embedding. For example
\cite[Proposition 9.1]{bp2}, $W(f)\leq\aleph_0$ for any closed map
$f\colon X\to Y$ such that $X$ is a metrizable space and every fiber
$f^{-1}(y)$, $y\in Y$, is separable.

\begin{theorem}
Let $X$ and $Y$ be paracompact spaces and $f\colon X\to Y$ a closed
surjective map with $\dim f\leq n$ and $W(f)\leq\aleph_0$. Then
$C(X,\mathbb I^{n+1})$ equipped with the uniform convergence
topology contains a dense subset of maps $g$ such that $\dim
f\triangle g\leq 0$.
\end{theorem}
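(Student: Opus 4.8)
I would show that $\mathcal G=\{g\in C(X,\mathbb I^{n+1}):\dim f\triangle g\le 0\}$ is residual in $C(X,\mathbb I^{n+1})$. Under the metric of uniform convergence this function space is completely metrizable (a uniform limit of maps into $\mathbb I^{n+1}$ is again such a map), hence a Baire space, so a residual set is dense; that is all that is claimed. Invoking $W(f)\le\aleph_0$, fix a map $h=(h_k)_k\colon X\to\mathbb I^{\aleph_0}$ with $f\triangle h$ an embedding; identifying $X$ with its image we may assume $X\subseteq Y\times\mathbb I^{\aleph_0}$ and $f=\pi_Y|_X$. Each fiber $f^{-1}(y)$ is then separable metrizable, and the coordinate maps $h_k$ restrict to an embedding of $f^{-1}(y)$ into $\mathbb I^{\aleph_0}$ --- simultaneously for all $y$. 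This uniform family of embeddings of the fibers into the single compact metrizable space $\mathbb I^{\aleph_0}$ is what will replace, throughout, the compactness of the fibers and the metrizability of $Y$ available in Levin's compact case \cite{lev}. As a preliminary step I would establish, for the property $\mathrm S=\{M:\dim M\le n\}$, the conclusion of Theorem 1.1: an $F_\sigma$-set $A=\bigcup_{i\ge 1}A_i\subseteq X$ with each $A_i$ closed in $X$, $\dim A_i\le n$, and $\dim\big(f^{-1}(y)\setminus A\big)=0$ for every $y$. Theorem 1.1 is stated for metrizable $X$ and $Y$, but in its proof metrizability of $Y$ is only used at points where $W(f)\le\aleph_0$ is equally available; so I would adapt that argument to the present situation, where the hypotheses ``$f$ closed'' and ``$\dim f\le n$'' come in.

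The core is a general-position lemma. One first fixes a countable family $\mathcal P$ of pairs of disjoint closed subsets of $X$, built from the $h_k$ and rational parameters, with the property that a subspace $Z$ of some fiber $f^{-1}(y)$ has $\dim Z\le 0$ if and only if $F_0\cap Z$ and $F_1\cap Z$ are separated in $Z$ for every $(F_0,F_1)\in\mathcal P$; such a family exists because all the fibers are embedded into the fixed space $\mathbb I^{\aleph_0}$ by the same maps $h_k$. By the choice of $\mathcal P$,
$$\mathcal G=\bigcap_{(F_0,F_1)\in\mathcal P}\mathcal H(F_0,F_1),\qquad
\mathcal H(F_0,F_1)=\bigl\{g:F_0,\,F_1\ \text{are separated in}\ (f\triangle g)^{-1}(y,z)\ \text{for all}\ (y,z)\bigr\}.$$
I would then express each $\mathcal H(F_0,F_1)$ as a countable intersection of open subsets of $C(X,\mathbb I^{n+1})$ --- routine when the fibers are compact, but here requiring the coordinates $h_k$ for the countability and the closedness of $f$ for the openness --- and show that each of the resulting open sets is dense. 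The density is where the real work lies. On each piece $A_i$, which has dimension $\le n$, a Hurewicz-type general-position theorem in the parametric form of the Pasynkov--Torunczyk results \cite{bp2, to, tv}, applied fiberwise by means of the embeddings $h_k$, lets one move $g$ by an arbitrarily small uniform perturbation so that $(f\triangle g)|A_i$ becomes fiberwise $0$-dimensional; on the complement $f^{-1}(y)\setminus A$ the required separations are already available since $\dim\big(f^{-1}(y)\setminus A\big)=0$. The subtle point --- the one for which the $F_\sigma$ structure of $A$ is designed --- is to fuse the contributions coming from $A$ and from $X\setminus A$ into a single separation inside the whole fiber $(f\triangle g)^{-1}(y,z)$: splitting this fiber naively into its part in $A$ and its part off $A$ yields only $\dim\le 1$, which is Sternfeld's bound \cite{s}, and getting down to $0$ requires performing the perturbation and the choice of separations simultaneously, exactly as in Levin's proof \cite{lev, l2}. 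Since $A_i$ is closed and $\mathbb I^{n+1}$ is an absolute retract, the restriction map $C(X,\mathbb I^{n+1})\to C(A_i,\mathbb I^{n+1})$ is open and surjective, so density transfers back to $C(X,\mathbb I^{n+1})$.

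Granting the lemma, $\mathcal G$ is a countable intersection of dense open subsets of $C(X,\mathbb I^{n+1})$, hence residual, and the theorem follows. The main obstacle is the density half of this lemma together with the fusion of the $A$-part and the $(X\setminus A)$-part into a single $0$-dimensional fiber; this is precisely where non-metrizability of $Y$ and non-compactness of the fibers block a direct transcription of Levin's compact argument, and the hypothesis $W(f)\le\aleph_0$ --- through the simultaneous embeddings $h_k|_{f^{-1}(y)}$ of all the fibers into $\mathbb I^{\aleph_0}$ --- is exactly what is there to get around it.
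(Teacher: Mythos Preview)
Your route is genuinely different from the paper's, and it has a real gap at precisely the point you flag as ``the main obstacle''.

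The paper does not attempt to redo Levin's argument in the non-compact setting. Instead it reduces to the compact metric case in two moves. First, pass to the \v Cech--Stone extension $\beta f\colon\beta X\to\beta Y$, which still satisfies $\dim\beta f\le n$. Second (Proposition~2.1), apply Pasynkov's factorization theorem to factor through a map $f^{*}\colon K\to T$ between \emph{metrizable compacta} with $\dim f^{*}\le n$; on $K$ one now quotes Levin's theorem \cite[Theorem 1.6]{lev} as a black box to obtain $\phi\colon K\to\mathbb I^{n+1}$ with $\dim(f^{*}\triangle\phi)\le 0$, and pulls the approximation back. The hypothesis $W(f)\le\aleph_0$ is used only once, to guarantee that the composite $\varphi=f\triangle\xi_1$ embeds $X$ into the intermediate space $H$, so that the restriction of the $0$-dimensional map $h$ to $X$ is the desired $g$. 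No residuality is claimed or needed; only density.

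Your plan instead tries to run Levin's density argument directly on $X$, and the density step is where it breaks. You propose to perturb $g$ on each closed piece $A_i$ (with $\dim A_i\le n$) so that $(f\triangle g)|_{A_i}$ becomes fiberwise $0$-dimensional, citing the parametric general-position results of \cite{bp2,to,tv}. But those results all require $Y$ to be finite-dimensional or a $C$-space; dropping that hypothesis is exactly the content of Theorem~1.2. Alternatively one could try to make $g|_{A_i}$ itself $0$-dimensional using only $\dim A_i\le n$, but $A_i$ is merely paracompact here (neither separable nor metrizable in general), and you have not supplied a result to that effect. You correctly observe that even after handling $A$ and $X\setminus A$ separately one only gets Sternfeld's bound $\dim\le 1$, and you propose to overcome this by performing ``the perturbation and the choice of separations simultaneously, exactly as in Levin's proof''. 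That sentence is the gap: Levin's argument in \cite{lev} is tightly tied to compact metric $X$ and $Y$, and the paper states explicitly in the introduction that ``Levin's arguments don't work for non-compact spaces''. Your plan does not indicate what replaces them.

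A smaller issue: your preliminary adaptation of Theorem~1.1 to non-metrizable $X$ and $Y$ is also not free. That proof uses Vainstein's lemma (metrizability of $X$) to reduce to perfect $f$, and \cite[Lemma 4.1]{tv} (metrizability of $Y$) to produce the $0$-dimensional selectors $C_j$. Replacing both by $W(f)\le\aleph_0$ may be feasible, but it is not the one-line substitution you suggest.
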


It was mentioned above that this corollary was established by Levin
\cite[Theorem 1.6]{lev} for metric compacta $X$ and $Y$.  Levin's
arguments don't work for non-compact spaces. We are using  the
Pasynkov's technique  from \cite{bp} to reduced the proof of Theorem
1.2 to the case of $X$ and $Y$ being metric compacta.

Our last results concern the function spaces $C(X,\mathbb I^n)$ and
$C(X,\mathbb I^{\aleph_0})$ equipped with the source limitation
topology. Recall that this topology on $C(X,M)$ with $M$ being a
metrizable space can be described as follows: the neighborhood base
at a given map $h\in C(X,M)$ consists of the sets
$B_\rho(h,\epsilon)=\{g\in C(X,M):\rho(g,h)<\epsilon\}$, where
$\rho$ is a fixed compatible metric on $M$ and $\epsilon:X\to(0,1]$
runs over continuous positive functions on $X$. The symbol
$\rho(h,g)<\epsilon$ means that $\rho(h(x),g(x))<\epsilon(x)$ for
all $x\in X$. It is well known that for paracompact spaces $X$ this
topology doesn't depend on the metric $\rho$ and it has the Baire
property provided $M$ is completely metrizable.

\begin{theorem}
Let $f\colon X\to Y$ be a perfect surjection between paracompact
spaces and $W(f)\leq\aleph_0$.
\begin{itemize}
\item[(i)] The maps $g\in C(X,\mathbb I^{\aleph_0})$ such that
$f\triangle g$ embeds $X$ into $Y\times\mathbb I^{\aleph_0}$ form a
dense $G_\delta$-set in $C(X,\mathbb I^{\aleph_0})$ with respect to
the source limitation topology;
\item[(ii)] If there exists a map $g\in C(X,\mathbb I^n)$ with $\dim
f\triangle g\leq 0$, then all maps having this property form a dense
$G_\delta$-set in $C(X,\mathbb I^n)$ with respect to the source
limitation topology.
\end{itemize}
\end{theorem}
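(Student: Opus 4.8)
The plan is to use the Baire property of the source limitation topology (recorded in the paragraph before the statement, since $\mathbb I^{\aleph_0}$ and $\mathbb I^n$ are completely metrizable and $X$ is paracompact) by exhibiting each target set as a countable intersection of \emph{open} subsets of the relevant function space and proving that each of these subsets is dense. First I fix, using $W(f)\le\aleph_0$, a map $h=(h_i)_{i\ge1}\colon X\to\mathbb I^{\aleph_0}$ with $f\triangle h$ an embedding. Since $f$ is perfect and $Y$ (being a perfect image of $X$) is paracompact, $f\times f$ is perfect, the fibre product $X\times_Y X=(f\times f)^{-1}(\Delta_Y)$ is closed in $X\times X$, and the map $p\colon X\times_Y X\to Y$, $p(x_1,x_2)=f(x_1)$, is perfect, its fibres being the compacta $f^{-1}(y)\times f^{-1}(y)$. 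Finally, for every $g$ the map $f\triangle g$ is perfect (standard: its fibres are compact, being closed in fibres of $f$, and it is closed because $f$ is closed and $\mathbb I^{\aleph_0}$, $\mathbb I^n$ are regular), hence closed; so in part (i) ``$f\triangle g$ embeds $X$'' is equivalent to ``$f\triangle g$ is injective''.

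For part (i): $f\triangle g$ is injective iff $g(x_1)\ne g(x_2)$ for all $(x_1,x_2)\in N:=(X\times_Y X)\setminus\Delta_X$, and injectivity of $f\triangle h$ gives $N=\bigcup_{i,m\ge1}F_{i,m}$ with $F_{i,m}=\{(x_1,x_2)\in X\times_Y X:|h_i(x_1)-h_i(x_2)|\ge 1/m\}$, each $F_{i,m}$ closed in $X\times X$, disjoint from $\Delta_X$, and mapped by $p$ perfectly onto a closed subset of $Y$. Put $O_{i,m}=\{g\in C(X,\mathbb I^{\aleph_0}):g(x_1)\ne g(x_2)\ \text{for all}\ (x_1,x_2)\in F_{i,m}\}$, so the set in (i) is $\bigcap_{i,m}O_{i,m}$. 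Openness of $O_{i,m}$: for $g\in O_{i,m}$ the function $(x_1,x_2)\mapsto\rho(g(x_1),g(x_2))$ is continuous and positive on $F_{i,m}$; its infimum over each (compact, nonempty) fibre of $p$ defines a positive lower semicontinuous function on the closed set $p(F_{i,m})$, which extends to a positive lower semicontinuous function on $Y$, hence, by paracompactness of $Y$, lies above a positive continuous $\nu\colon Y\to(0,1]$; then $B_\rho\!\bigl(g,\tfrac14(\nu\circ f)\bigr)\subset O_{i,m}$ by the triangle inequality. Density of $O_{i,m}$ is the substantial point: given $g_0$ and a continuous $\varepsilon\colon X\to(0,1]$ one notes that $V_n=\{y\in Y:\varepsilon>1/n\ \text{on}\ f^{-1}(y)\}$ is an increasing open cover of $Y$ (lower semicontinuity of the fibrewise minimum again), takes a locally finite partition of unity $\{\phi_n\}$ on $Y$ subordinated to $\{V_n\}$, and perturbs $g_0$ on $f^{-1}(\operatorname{supp}\phi_n)$ in suitably deep, pairwise distinct coordinates, the perturbations being driven by $h_i$ and having amplitude controlled by $\phi_n$; this keeps the total displacement below $\varepsilon$ while destroying every coincidence $g(x_1)=g(x_2)$ on $F_{i,m}$. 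The Baire property then yields that $\bigcap_{i,m}O_{i,m}$ is a dense $G_\delta$.

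For part (ii): again $f\triangle g$ is perfect, and by the standard partition criterion for $0$-dimensional perfect maps (see \cite{e}) the set $\{g\in C(X,\mathbb I^n):\dim f\triangle g\le0\}$ is a countable intersection $\bigcap_kW_k$, where $W_k$ asks that a fixed finite open cover of $X$ admit a finite open refinement whose members are pairwise disjoint within each fibre of $f\triangle g$; $W_k$ is open because this disjointness within fibres is stable under small perturbations of $g$, the admissible amplitude being produced exactly as in the openness argument of part (i) (fibrewise infimum, then paracompactness of $Y$). For density of $W_k$ one uses the hypothesis that some $g_*$ has $\dim f\triangle g_*\le0$: given $g_0$ and $\varepsilon$, one repairs the single condition $W_k$ for $g_0$ by a perturbation smaller than $\varepsilon$, the repair being performed fibrewise over $Y$ with $g_*$ supplying on each compact fibre $f^{-1}(y)$ a refinement whose traces are disjoint in that fibre, and then amalgamated over $Y$ by the same $\{V_n\}$–partition-of-unity device so that the displacement stays below $\varepsilon$. (Alternatively, one may first reduce $X$ and $Y$ to metric compacta by Pasynkov's technique of \cite{bp}, as is done for Theorem 1.2, and then run the repair in the uniform topology where $\varepsilon$ can be taken constant.) Hence each $W_k$ is open and dense, and the Baire property gives the dense $G_\delta$.

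The main obstacle in both parts is the density step, and more precisely the passage from the easy fibrewise corrections over the compact fibres $f^{-1}(y)$ to one global perturbation respecting the \emph{variable} bound $\varepsilon$; this is where perfectness of $f$ (compactness of fibres, hence lower semicontinuity of fibrewise minima and the existence of the filtration $\{V_n\}$) and paracompactness of $Y$ enter essentially. The additional delicate point in part (ii) is verifying that one level $W_k$ of the partition criterion can always be made to hold by an arbitrarily small perturbation once the global witness $g_*$ is at hand; granting this, the rest is the routine Baire-category packaging.
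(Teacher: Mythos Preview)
Your Baire-category framing is correct, and for part~(i) your fibre-product decomposition into the sets $O_{i,m}$ is a legitimate alternative to the paper's route (the paper instead writes the target set as $\bigcap_k C_{\omega_k}(X,\mathbb I^{\aleph_0},f)$, where $\omega_k=\lambda^{-1}(\gamma_k)$ for covers $\gamma_k$ of $\mathbb I^{\aleph_0}$ of mesh $\le 1/k$, and proves density by the same nerve-map factorization described below for~(ii)). Your openness arguments are sound, and the ``deep coordinate'' perturbation for density in~(i), though only sketched, is a standard Hilbert-cube device that can be made precise.

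Part~(ii), however, has a genuine gap at both steps. First, your $G_\delta$ decomposition does not work as stated: you let $W_k$ be determined by ``a fixed finite open cover of $X$'', but $X$ is an arbitrary paracompact space, and no countable family of \emph{finite} covers is fine enough to characterize $\dim f\triangle g\le 0$. The paper repairs this precisely by reusing the embedding map $\lambda$: since $f\triangle\lambda$ embeds $X$, the (in general infinite) covers $\omega_k=\lambda^{-1}(\gamma_k)$ are cofinal on each fibre, and the sets $C_{(\omega_k,0)}(X,\mathbb I^n,f)$ give the correct decomposition. Second, your density argument is not an argument: you say the witness $g_*$ ``supplies on each compact fibre'' a disjoint refinement which you then ``amalgamate'' by a partition of unity, but you never specify what perturbation of $g_0$ is taken, and in $\mathbb I^n$ there are no spare coordinates to absorb corrections as in~(i); a convex or partition-of-unity mixture of $g_0$ and $g_*$ has no reason to land in $W_k$. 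Your fallback (``reduce to metric compacta by Pasynkov's technique, as in Theorem~1.2'') yields only uniform approximation, not source-limitation approximation against a variable~$\varepsilon$. The paper's density proof is substantively different and supplies the missing idea: given $g_0$ and $\varepsilon$, it invokes \cite[Theorem~6]{bv} (this is exactly where the hypothesis that some $g$ has $\dim f\triangle g\le 0$ is consumed) to factor $f$ through a perfect $PL$-map $p_N\colon N\to L$ between simplicial complexes with $\dim p_N\le n$, via a $\mathcal U_1$-map $\alpha\colon X\to N$; since the nerve $L$ is a $C$-space, \cite[Theorem~2.2]{tv} then produces $\varphi_1\in C_{(\gamma,0)}(N,\mathbb I^n,p_N)$ close to the pushed-down $g_0$, and $h=\varphi_1\circ\alpha$ is the required approximation in $C_{(\omega_m,0)}(X,\mathbb I^n,f)$.
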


\begin{cor}
Let $f\colon X\to Y$ be a perfect surjection with $\dim f\leq n$ and
$W(f)\leq\aleph_0$, where $X$ and $Y$ are paracompact spaces. Then
all maps $g\in C(X,\mathbb I^{n+1})$ with $\dim f\triangle g\leq 0$
form a dense $G_\delta$-set in $C(X,\mathbb I^{n+1})$ with respect
to the source limitation topology.
\end{cor}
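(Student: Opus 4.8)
The plan is to obtain this corollary by combining Theorems 1.2 and 1.3(ii); the substantive work has already been carried out in those two theorems, so what remains is only to check that the hypotheses line up. First I would note that a perfect map is closed, so the hypotheses of Theorem 1.2 are met: $f\colon X\to Y$ is a closed surjection between paracompact spaces with $\dim f\le n$ and $W(f)\le\aleph_0$. Hence Theorem 1.2 gives that the maps $g\in C(X,\mathbb I^{n+1})$ with $\dim f\triangle g\le 0$ form a dense subset of $C(X,\mathbb I^{n+1})$ in the uniform convergence topology; in particular this set is nonempty, so there exists at least one $g_0\in C(X,\mathbb I^{n+1})$ with $\dim f\triangle g_0\le 0$.

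Next I would apply Theorem 1.3(ii) with the integer $n$ there replaced by $n+1$. Its standing hypotheses --- that $f$ be a perfect surjection between paracompact spaces with $W(f)\le\aleph_0$ --- are exactly those of the corollary, and the conditional hypothesis of part (ii), namely that there exist a map $g\in C(X,\mathbb I^{n+1})$ with $\dim f\triangle g\le 0$, was just verified in the previous step using $g_0$. Therefore Theorem 1.3(ii) yields that all maps $g\in C(X,\mathbb I^{n+1})$ with $\dim f\triangle g\le 0$ form a dense $G_\delta$-subset of $C(X,\mathbb I^{n+1})$ in the source limitation topology, which is precisely the assertion of the corollary.

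Since $\mathbb I^{n+1}$ is compact, hence completely metrizable, $C(X,\mathbb I^{n+1})$ with the source limitation topology has the Baire property, so the dense $G_\delta$-set produced above is genuinely ``large'' in the category sense; this is the point of upgrading the bare density coming from Theorem 1.2 to the $G_\delta$-density of Theorem 1.3. There is no real obstacle in this last step beyond the bookkeeping of the index shift from $n$ to $n+1$ and the observation that ``perfect'' implies ``closed''; all the genuine difficulty has already been absorbed into the proofs of Theorems 1.2 and 1.3 --- the reduction to the compact case via Pasynkov's technique, Levin's compact results, and the Baire-category machinery for the source limitation topology.
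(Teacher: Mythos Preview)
Your proof is correct and matches the paper's own argument exactly: the paper states that Corollary~1.4 ``follows directly from Theorem~1.2 and Theorem~1.3(ii),'' and you have spelled out precisely how, including the minor bookkeeping (perfect $\Rightarrow$ closed, and the index shift $n\mapsto n+1$).
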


Corollary 1.4 follows directly from Theorem 1.2 and Theorem 1.3(ii).
Corollary 1.5 below follows from Corollary 1.4 and \cite[Corollary
1.1]{bv1}, see Section 3.

\begin{cor}
Let $X$, $Y$ be paracompact spaces and $f\colon X\to Y$ a perfect
surjection with $\dim f\leq n$ and $W(f)\leq\aleph_0$. Then for
every matrizable $ANR$-space $M$ the maps $g\in C(X,\mathbb
I^{n+1}\times M)$ such that $\dim g(f^{-1}(y))\leq n+1$ for all
$y\in Y$ form a dense $G_\delta$-set $E$ in $C(X,\mathbb
I^{n+1}\times M)$ with respect to the source limitation topology.
\end{cor}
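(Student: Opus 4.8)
The plan is to derive Corollary 1.5 from Corollary 1.4 together with \cite[Corollary 1.1]{bv1}. First I would fix a compatible metric on the product $\mathbb I^{n+1}\times M$ and recall that, by Corollary 1.4, the set $\mathcal D$ of maps $h\in C(X,\mathbb I^{n+1})$ with $\dim f\triangle h\leq 0$ is a dense $G_\delta$ in $C(X,\mathbb I^{n+1})$ for the source limitation topology. The key observation is that a map $h$ with $\dim f\triangle h\le 0$ restricts on each fiber $f^{-1}(y)$ to a map $h|f^{-1}(y)$ with $0$-dimensional fibers, i.e. $h|f^{-1}(y)$ is a $0$-dimensional map of a metric compactum (here perfectness of $f$ is used to know the fibers are compact). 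By the Hurewicz-type dimension-raising inequality, such a map $h$ sends $f^{-1}(y)$ onto a set of dimension $\le n+1$; so already the first coordinate of any candidate $g$ behaves well. What remains is to control the $M$-coordinate.

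The second step is to invoke \cite[Corollary 1.1]{bv1}: for a fixed $0$-dimensional map $\phi$ of a compactum $Z$ (here $\phi=h|f^{-1}(y)$ composed appropriately, or more uniformly, the map $f\triangle h$), the maps $\psi\in C(Z,M)$ for which $\dim(\phi\triangle\psi)(Z)$ or $\dim\psi(\phi^{-1}(\text{pt}))$ stays within the prescribed bound form a dense $G_\delta$; applied fiberwise and then globalized via the source limitation topology, this produces, for a \emph{fixed} $h\in\mathcal D$, a dense $G_\delta$ set $\mathcal G_h\subset C(X,M)$ of maps $q$ such that $\dim\bigl((h\triangle q)(f^{-1}(y))\bigr)\le n+1$ for all $y$. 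The map $g=h\triangle q\in C(X,\mathbb I^{n+1}\times M)$ then lies in the target set $E$. The globalization from the fiberwise statement of \cite{bv1} to a statement over all of $Y$ at once is exactly the mechanism already used in the proof of Theorem 1.3 (Baire category in the source limitation topology, together with $W(f)\le\aleph_0$ and perfectness), so I would cite that machinery rather than repeat it.

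The third step is to assemble these pieces into a single dense $G_\delta$ in $C(X,\mathbb I^{n+1}\times M)$. Write a map $g\in C(X,\mathbb I^{n+1}\times M)$ as $g=\pi_1\circ g\,\triangle\,\pi_2\circ g$ with $\pi_1\circ g\in C(X,\mathbb I^{n+1})$ and $\pi_2\circ g\in C(X,M)$; the source limitation topology on the product space corresponds to the product of the source limitation topologies on the factors. Density of $E$ is then routine: given any $g_0$ and any positive $\epsilon$, first perturb $\pi_1\circ g_0$ inside $\mathcal D$ (possible since $\mathcal D$ is dense), then perturb $\pi_2\circ g_0$ inside the corresponding $\mathcal G_h$. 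For the $G_\delta$ property, I would express $E$ as a countable intersection: for each $k$, the set of $g$ for which the fiberwise image $g(f^{-1}(y))$ admits, for every $y$, a cover by finitely many sets of small diameter witnessing $\dim\le n+1$ at scale $1/k$; each such set is open by perfectness of $f$ (so fibers vary upper-semicontinuously) plus a standard compactness argument, and $E$ is their intersection with the dense $G_\delta$ coming from the first coordinate.

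The main obstacle I anticipate is the globalization in the second step: \cite[Corollary 1.1]{bv1} is a statement about a single $0$-dimensional map of a compactum, and passing from "for each fiber $f^{-1}(y)$ separately" to "a single residual set of maps $q\in C(X,M)$ working simultaneously for all $y\in Y$" requires the source limitation topology's Baire property together with the hypotheses $W(f)\le\aleph_0$ and perfectness of $f$ — precisely the non-compact upgrade that occupies Theorem 1.3. Everything else is bookkeeping about the product source limitation topology and the elementary fact that $\dim f\triangle h\le 0$ forces $h$ to be fiberwise $0$-dimensional, hence image-dimension $\le n+1$ by the classical inequality $\dim Z\le\dim\phi(Z)+\dim\phi$.
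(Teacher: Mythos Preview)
Your approach is essentially the paper's: prove density by first perturbing the $\mathbb I^{n+1}$-coordinate into the set $\{h_1:\dim f\triangle h_1\le 0\}$ via Corollary~1.4, then perturb the $M$-coordinate via \cite[Corollary~1.1]{bv1}, and conclude $\dim g(f^{-1}(y))\le n+1$ by Hurewicz applied to the projection $g(f^{-1}(y))\to h_1(f^{-1}(y))$. Two clarifications are worth making. First, the ``globalization obstacle'' you anticipate does not arise: \cite[Corollary~1.1]{bv1} is already a global statement about a perfect $0$-dimensional map between paracompact spaces, so the paper applies it directly to $f\triangle h_1\colon X\to Y\times\mathbb I^{n+1}$ to obtain a dense set of $h_2\in C(X,M)$ with $\dim h_2\bigl((f\triangle h_1)^{-1}(z)\bigr)=0$ for every $z$; no fiberwise-to-global upgrade via Theorem~1.3 machinery is needed. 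Second, for the $G_\delta$ part the paper does not argue via scale-$1/k$ covers as you sketch, but simply cites \cite[Proposition~2.1]{bv1}, which gives the $G_\delta$ property of $E$ directly.
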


Finally, let us formulate the following question concerning property
$\mathrm {S}$ (an affirmative answer of this question yields that
(strong) countable-dimensionality is an $\mathrm {S}$-property):

\begin{question}
Suppose $f\colon X\to Y$ is a perfect surjection between metrizable
spaces such that $\dim Y=0$ and each fiber $f^{-1}(y)$, $y\in Y$, is
$($strongly$)$ countable-dimensional. Is it true that $X$ is
$($strongly$)$ countable-dimensional?
\end{question}

\section{$\mathrm S$-properties and maps into finite-dimensional cubes}

This section contains the proofs of Theorem 1.1 and Theorem 1.2.

\medskip \textit{Proof of Theorem $1.1$}. We follow the proof of
\cite[Proposition 4.1]{vv}. Let us show first that the proof is
reduced to the case $f$ is a perfect map. Indeed, according to
Vainstein's lemma, the boundary $\mathrm{Fr}f^{-1}(y)$ of every
fiber $f^{-1}(y)$ is compact. Defining $F(y)$ to be
$\mathrm{Fr}f^{-1}(y)$ if $\mathrm{Fr}f^{-1}(y)\neq\varnothing$, and
an arbitrary point from $f^{-1}(y)$ otherwise, we obtain a set
$X_0=\bigcup\{F(y):y\in Y\}$ such that $X_0\subset X$ is closed and
the restriction $f|X_0$ is a perfect map. Moreover, each
$f^{-1}(y)\backslash X_0$ is open in $X$ and has the property
$\mathrm S$ (as an $F_\sigma$-subset of the $\mathrm S$-space
$f^{-1}(y)$). Hence, $X\backslash X_0$ being the union of the
discrete family $\{f^{-1}(y)\backslash X_0:y\in Y\}$ of $\mathrm
S$-set is an $\mathrm S$-set. At the same time $X\backslash X_0$ is
open in $X$. Consequently, $X\backslash X_0$ is the union of
countably many closed sets $X_i\subset X$, $i=1,2,..$. Obviously,
each $X_i$, $i\geq 1$, also has the property $\mathrm S$. Therefore,
it suffices to prove Theorem 1.1 for the $\mathrm S$-map
$f|X_0\colon X_0\to Y$.

So, we may suppose that $f$ is perfect. According to \cite{bp},
there exists a map $g\colon X\to\mathbb I^{\aleph_0}$ such that $g$
embeds every fiber $f^{-1}(y)$, $y\in Y$. Let
$g=\triangle_{i=1}^{\infty}g_i$ and $h_i=f\triangle g_i\colon X\to
Y\times\mathbb I$, $i\geq 1$. Moreover, we choose countably many
closed intervals $\mathbb I_j$ such that every open subset of
$\mathbb I$ contains some $\mathbb I_j$. By \cite[Lemma 4.1]{tv},
for every $j$ there exists a $0$-dimensional $F_\sigma$-set
$C_j\subset Y\times\mathbb I_j$ such that $C_j\cap
(\{y\}\times\mathbb I_j)\neq\varnothing$ for every $y\in Y$. Now,
consider the sets $A_{ij}=h_i^{-1}(C_j)$ for all $i,j\geq 1$ and let
$A$ be their union. Since $f$ is an $\mathrm S$-map, so is the map
$h_i$ for any $i$. Hence, $A_{ij}$ has property $\mathrm S$ for all
$i,j$. This implies that $A$ has also the same property.

It remains to show that $\mathrm{dim} f^{-1}(y)\backslash A\leq 0$
for every $y\in Y$. Let $\mathrm{dim} f^{-1}(y_0)\backslash A>0$ for
some $y_0$. Since $g|f^{-1}(y_0)$ is an embedding, there exists an
integer $i$ such that $\mathrm{dim} g_i(f^{-1}(y_0)\backslash A)>0$.
Then $g_i(f^{-1}(y_0)\backslash A)$  has a nonempty interior in
$\mathbb I$. So, $g_i(f^{-1}(y_0)\backslash A)$ contains some
$\mathbb I_j$. Choose $t_0\in\mathbb I_j$ with $c_0=(y_0,t_0)\in
C_j$. Then there exists $x_0\in f^{-1}(y_0)\backslash A$ such that
$g_i(x_0)=t_0$. On the other hand, $x_0\in h_i^{-1}(c_0)\subset
A_{ij}\subset A$, a contradiction. \hfill$\square$

\medskip
\textit{Proof of Theorem $1.2$}. We first prove next proposition
which is a small modification of \cite[Theorem 8.1]{bp}. For any map
$f\colon X\to Y$ we consider the set $C(X,Y\times\mathbb I^{n+1},f)$
consisting of all maps $g\colon X\to Y\times\mathbb I^{n+1}$ such
that $f=\pi_n\circ g$, where $\pi_n\colon Y\times\mathbb I^{n+1}\to
Y$ is the projection onto $Y$. We also consider the other projection
$\varpi_n\colon Y\times\mathbb I^{n+1}\to\mathbb I^{n+1}$. It is
easily seen that the formula $g\rightarrow\varpi_n\circ g$ provides
one-to-one correspondence between $C(X,Y\times\mathbb I^{n+1},f)$
and $C(X,\mathbb I^{n+1})$. So, we may assume that
$C(X,Y\times\mathbb I^{n+1},f)$ is a metric space isometric with
$C(X,\mathbb I^{n+1})$, where $C(X,\mathbb I^{n+1})$ is equipped
with the supremum metric.

\begin{proposition}
Let $f\colon X\to Y$ be an $n$-dimensional surjective map between
compact spaces with $n>0$ and $\lambda\colon X\to Z$ a map into a
metric compactum $Z$. Then the maps $g\in C(X,Y\times\mathbb
I^{n+1},f)$ satisfying the condition below form a dense subset of
$C(X,Y\times\mathbb I^{n+1},f)$: there exists a compact space $H$
and maps $\varphi\colon X\to H$, $h\colon H\to Y\times\mathbb
I^{n+1}$ and $\mu\colon H\to Z$ such that $\lambda=\mu\circ\varphi$,
$g=h\circ\varphi$, $W(h)\leq\aleph_0$ and $\dim h=0$.
\end{proposition}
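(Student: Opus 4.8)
The plan is to follow the factorization machinery of Pasynkov \cite{bp}, building the compact space $H$ as a limit of an inverse sequence of maps into finite-dimensional polyhedra, while approximating $g$ better and better. First I would fix a compatible metric and a positive continuous $\epsilon$ on $X$, and try to produce $g$ within $\epsilon$ of a given $g_0\in C(X,Y\times\mathbb I^{n+1},f)$ having the stated property. The idea is to construct, by induction, maps $\varphi_k\colon X\to H_k$ onto compact metric polyhedra (or finite-dimensional compacta) $H_k$, together with bonding maps $p_k\colon H_{k+1}\to H_k$, maps $h_k\colon H_k\to Y\times\mathbb I^{n+1}$ with $h_k\circ p_k$ converging to the eventual $g$, and maps $\mu_k\colon H_k\to Z$ factoring $\lambda$ approximately, so that $H=\varprojlim(H_k,p_k)$, $\varphi=\varprojlim\varphi_k$, $h=\varprojlim h_k$, $\mu=\varprojlim\mu_k$. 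Since each $H_k$ will be a compact metric space of bounded weight, the limit $H$ satisfies $W(h)\le\aleph_0$ automatically; the remaining conditions $\lambda=\mu\circ\varphi$ and $g=h\circ\varphi$ are secured by forcing the approximation errors to be summable along the sequence.

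The heart of the matter is arranging $\dim h=0$, i.e. that the limit map $h\colon H\to Y\times\mathbb I^{n+1}$ has all fibers $0$-dimensional. This is where the hypothesis $\dim f\le n$ together with the extra coordinate $\mathbb I^{n+1}$ (rather than $\mathbb I^n$) is used, exactly as in Levin's theorem. The strategy at stage $k$ is: given $h_k\colon H_k\to Y\times\mathbb I^{n+1}$, first perturb the $\mathbb I^{n+1}$-component of the current approximation to $g$ so that, over a fine finite cover of $Y\times\mathbb I^{n+1}$ by small open sets, the preimages in $X$ are split into pieces on which the composite map to $\mathbb I^{n+1}$ has small image — this is possible because $\dim f\le n$ lets one push the fibers off an $n$-dimensional obstruction set using a generic map into $\mathbb I^{n+1}$ (a Hurewicz-type / Levin-type argument, or an appeal to the compact case of Levin \cite{lev} applied fiberwise through the Pasynkov factorization). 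Then one enlarges the polyhedron $H_{k+1}$, via the nerve of a suitably refined cover of $X$ that simultaneously refines the pullbacks of covers of $H_k$, of $Z$, and of $Y\times\mathbb I^{n+1}$, so that the mesh of $h_{k+1}$-fibers (measured through all earlier bonding maps) is controlled. Passing to the limit, the fibers of $h$ are intersections of decreasing compact sets of vanishing "essential dimension," giving $\dim h=0$.

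I expect the main obstacle to be precisely the bookkeeping that makes the $0$-dimensionality survive the inverse limit: one must choose, at each stage, the refining cover of $X$ so that it is fine with respect to the \emph{current} metric on $H_k$ (pulled back), yet the perturbation of $g$ needed to kill positive-dimensional fibers must not destroy the approximations to $\lambda$ and to the previously fixed coordinates. The standard device is to interleave the two requirements — do the dimension-lowering perturbation first, using that $\dim f\le n$ is inherited by $f$ factored through $H_k$ (since $\varphi_k$ does not increase fiber dimension once $H_k$ is built correctly), then refine — and to appeal to the compact, finite-stage version of the statement, which is essentially Levin's result \cite[Theorem 1.6]{lev} or its Pasynkov-style proof, at each step. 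Once the inductive step is in place, the limit argument and the verification of $W(h)\le\aleph_0$, $\lambda=\mu\circ\varphi$, $g=h\circ\varphi$ are routine, and density follows since $\epsilon$ and $g_0$ were arbitrary.
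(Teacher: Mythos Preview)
Your plan has a structural gap that blocks it as written. You propose to build $H=\varprojlim(H_k,p_k)$ as an inverse limit of compact metric polyhedra, hence $H$ would itself be a metric compactum. But then $\pi_n\circ h\colon H\to Y$ composed with $\varphi\colon X\to H$ must equal $f$ exactly (since $g\in C(X,Y\times\mathbb I^{n+1},f)$ means $\pi_n\circ g=f$, not merely an approximation), and $f$ is surjective. Thus $Y=(\pi_n\circ h)(\varphi(X))$ is a continuous image of a metric compactum, forcing $Y$ to be metrizable. The proposition, however, is stated for arbitrary compact $X,Y$ and is applied in the proof of Theorem~1.2 with $Y=\beta Y'$ a \v{C}ech--Stone compactification, which is typically non-metrizable. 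So the inverse-limit-of-polyhedra architecture for $H$ cannot work in the stated generality; the space $H$ must carry the non-metrizable $Y$-factor explicitly.

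The paper's proof avoids this by a single-step reduction rather than an inverse limit. One applies Pasynkov's factorization theorem once to the pair of maps $f$ and $\lambda\triangle g_1$ (where $g_1=\varpi_n\circ g_0$) to obtain metrizable compacta $K,T$, a map $f^{*}\colon K\to T$ with $\dim f^{*}\le n$, and factorizations $\xi_1\colon X\to K$, $\eta^{*}\colon Y\to T$ with $\eta^{*}\circ f=f^{*}\circ\xi_1$. Now Levin's theorem for metric compacta gives a single map $\phi\colon K\to\mathbb I^{n+1}$, $\epsilon$-close to the old $\mathbb I^{n+1}$-coordinate, with $\dim(f^{*}\triangle\phi)\le 0$. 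One then sets $\varphi=f\triangle\xi_1$, $H=\varphi(X)\subset Y\times K$, and $h=(\mathrm{id}_Y\times\phi)|H$; the map $\mu$ is the obvious composition through $K\to Z$. The key point is that $H$ sits inside $Y\times K$, so the non-metrizable $Y$-factor is retained, while $W(h)\le\aleph_0$ follows from the metrizability of $K$. The verification $\dim h\le 0$ is done by showing that the projection $H\to K$ embeds each $h$-fiber into a fiber of $f^{*}\triangle\phi$. No iteration or inverse limit is needed: the dimension-lowering happens once, at the level of the metric factorization, and Levin's already-proved compact metric result is used as a black box.
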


\begin{proof}
We fix a map $g_0\in C(X,Y\times\mathbb I^{n+1},f)$ and
$\epsilon>0$. Let $g_1=\varpi_n\circ g_0$. Then $\lambda\triangle
g_1\in C(X,Z\times\mathbb I^{n+1})$. Consider also the constant maps
$f'\colon Z\times\mathbb I^{n+1}\to Pt$ and $\eta\colon Y\to Pt$,
where $Pt$ is the one-point space. So, we have $\eta\circ
f=f^{'}\circ (\lambda\triangle g_1)$. According to Pasynkov's
factorization theorem \cite[Theorem 13]{bp3}, there exist metrizable
compacta $K$, $T$ and  maps $f^{*}\colon K\to T$, $\xi_1\colon X\to
K$, $\xi_2\colon K\to Z\times\mathbb I^{n+1}$ and $\eta^{*}\colon
Y\to T$ such that:
\begin{itemize}
\item $\eta^{*}\circ f=f^{*}\circ\xi_1$;
\item $\xi_2\circ\xi_1=\lambda\triangle g_1$;
\item $\dim f^{*}\leq\dim f\leq n$.
\end{itemize}
If $p\colon Z\times\mathbb I^{n+1}\to Z$ and $q\colon Z\times\mathbb
I^{n+1}\to\mathbb I^{n+1}$ denote the corresponding projections, we
have
$$p\circ\xi_2\circ\xi_1=\lambda\hbox{~}\mbox{and}\hbox{~}q\circ\xi_2\circ\xi_1=g_1.$$
Since $\dim f^{*}\leq n$, by Levin's result \cite[Theorem 1.6]{lev},
there exists a map $\phi\colon K\to\mathbb I^{n+1}$ such that $\phi$
is $\epsilon$-close to $q\circ\xi_2$ and $\dim
f^{*}\triangle\phi\leq 0$. Then the map $\phi\circ\xi_1$ is
$\epsilon$-close to $g_1$, so $g=f\triangle(\phi\circ\xi_1)$ is
$\epsilon$-close to $g_0$. Denote $\varphi=f\triangle\xi_1$,
$H=\varphi (X)$ and $h=(id_Y\times\phi)|H$. If $\varpi_H\colon H\to
K$ is the restriction of the projection $Y\times K\to K$ on $H$, we
have
$$\lambda=p\circ\xi_2\circ\xi_1=p\circ\xi_2\circ\varpi_H\circ\varphi,\hbox{~}\mbox{so}\hbox{~}\lambda=\mu\circ\varphi,
\hbox{~}\mbox{where}\hbox{~}\mu=p\circ\xi_2\circ\varpi_H.$$
Moreover, $g=f\triangle(\phi\circ\xi_1)=(id_Y\times\phi)\circ
(f\triangle\xi_1)=h\circ\varphi$. Since $K$ is a metrizable
compactum, $W(\phi)\leq\aleph_0$. Hence, $W(h)\leq\aleph_0$.

To show that $\dim h\leq 0$, it suffices to prove that $\dim
h\leq\dim f^{*}\triangle\phi$. To this end, we show that any fiber
$h^{-1}((y,v))$, where $(y,v)\in Y\times\mathbb I^{n+1}$, is
homeomorphic to a subset of the fiber
$(f^{*}\triangle\phi)^{-1}((\eta^{*}(y),v))$. Indeed, let $\pi_Y$ be
the restriction of the projection $Y\times K\to Y$ on the set $H$.
Since $\eta^{*}\circ f=f^{*}\circ\xi_1$, $H$ is a subset of the
pullback of $Y$ and $K$ with respect to the maps $\eta^{*}$ and
$f^{*}$. Therefore, $\varpi_H$ embeds every fiber $\pi_Y^{-1}(y)$
into $(f^*)^{-1}(y)$, $y\in Y$. Let $a_i=(y_i,k_i)\in H\subset
Y\times K$, $i=1,2$, such that $h(a_1)=h(a_2)$. Then
$(y_1,\phi(k_1))=(y_2,\phi(k_2))$, so $y_1=y_2=y$ and
$\phi(k_1)=\phi(k_2)=v$. This implies $\varpi_H(a_i)=k_i\in
(f^{*})^{-1}\big(\eta^{*}(\pi_Y(a_i))\big)=(f^{*})^{-1}(\eta^{*}(y))$,
$i=1,2$. Hence, $\varpi_H$ embeds the fiber $h^{-1}((y,v))$ into the
fiber $(f^{*}\triangle\phi)^{-1}((\eta^{*}(y),v))$. Consequently,
$\dim h\leq\dim f^{*}\triangle\phi=0$.
\end{proof}

We can prove now Theorem 1.2. It suffices to show every map from
$C(X,Y\times\mathbb I^{n+1},f)$ can be approximated by maps $g\in
C(X,Y\times\mathbb I^{n+1},f)$ with $\dim g\leq 0$. We fix $g_0\in
C(X,Y\times\mathbb I^{n+1},f)$ and $\epsilon>0$. Since
$W(f)\leq\aleph_0$, there exists a map $\lambda\colon X\to\mathbb
I^{\aleph_0}$ such that $f\triangle\lambda$ is an embedding.  Let
$\beta f\colon\beta X\to\beta Y$ be the Cech-Stone extension of the
map $f$. Then $\dim\beta f\leq n$, see \cite[Theorem 15]{bp3}.
Consider also the maps $\beta\lambda\colon\beta X\to\mathbb
I^{\aleph_0}$ and $\bar{g}_0=\beta f\triangle\beta g_1$, where
$g_1=\varpi_n\circ g_0$. According to Proposition 2.1, there exists
a map $\bar{g}\in C(\beta X,\beta Y\times\mathbb I^{n+1},\beta f)$
which is $\epsilon$-close to $\bar{g}_0$ and satisfies the following
conditions: there exists a compact space $H$ and maps $\varphi\colon
\beta X\to H$, $h\colon H\to\beta Y\times\mathbb I^{n+1}$ and
$\mu\colon H\to\mathbb I^{\aleph_0}$ such that
$\beta\lambda=\mu\circ\varphi$, $\bar{g}=h\circ\varphi$,
$W(h)\leq\aleph_0$ and $\dim h=0$. We have the following equalities
$$\beta
f\triangle\beta\lambda=(\pi_n\circ\bar{g})\triangle(\mu\circ\varphi)=(\pi_n\circ
h\circ\varphi)\triangle(\mu\circ\varphi)= \big((\pi_n\circ
h)\triangle\mu\big)\circ\varphi,$$ where $\pi_n$ denotes the
projection $\beta Y\times\mathbb I^{n+1}\to\beta Y$. This implies
that $\varphi$ embeds $X$ into $H$ because $f\triangle\lambda$
embeds $X$ into $Y\times\mathbb I^{\aleph_0}$. Let $g$ be the
restriction of $\bar{g}$ over $X$. Identifying $X$ with $\varphi
(X)$, we obtain that $h$ is an extension of $g$. Hence, $\dim
g\leq\dim h=0$. Observe also that $g$ is $\epsilon$-close to $g_0$,
which completes the proof. \hfill$\square$

\section{Proof of Theorem 1.3 and Corollary 1.5}

\textit{Proof of Theorem $1.3(ii)$.} We first prove condition (ii).
Since $W(f)\leq\aleph_0$, there exists a map $\lambda\colon
X\to\mathbb I^{\aleph_0}$ such that $f\triangle\lambda$ embeds $X$
into $Y\times\mathbb I^{\aleph_0}$. Choose a sequence
$\{\gamma_k\}_{k\geq 1}$ of open covers of $\mathbb I^{\aleph_0}$
with $\mathrm{mesh}(\gamma_k)\leq 1/k$, and let
$\omega_k=\lambda^{-1}(\gamma_k)$ for all $k$. We denote by
$C_{(\omega_k,0)}(X,\mathbb I^{n},f)$ the set of all maps $g\in
C(X,\mathbb I^{n})$ with the following property: every $z\in
(f\triangle g)(X)$ has a neighborhood $V_z$ in $Y\times\mathbb
I^{n}$ such that $(f\triangle g)^{-1}(V_z)$ can be represented as
the union of a disjoint open in $X$ family refining the cover
$\omega_k$. According to \cite[Lemma 2.5]{tv}, each of the sets
$C_{(\omega_k,0)}(X,\mathbb I^{n},f)$, $k\geq 1$, is open in
$C(X,\mathbb I^{n})$ with respect to the source limitation topology.
It follows from the definition of the covers $\omega_k$ that
$\bigcap_{k\geq 1}C_{(\omega_k,0)}(X,\mathbb I^{n},f)$ consists of
maps $g$ with $\dim f\triangle g\leq 0$. Since $C(X,\mathbb I^{n})$
with the source limitation topology has the Baire property, it
remains to show that any $C_{(\omega_k,0)}(X,\mathbb I^{n},f)$ is
dense in $C(X,\mathbb I^{n})$.

To this end, we fix a cover $\omega_m$, a map $g_0\in C(X,\mathbb
I^{n})$ and a function $\epsilon\colon X\to (0,1]$. We are going to
find $h\in C_{(\omega_m,0)}(X,\mathbb I^{n},f)$ such that
$\rho(g_0(x),h(x))<\epsilon(x)$ for all $x\in X$, where $\rho$ is
the Euclidean metric on $\mathbb I^n$. Then, by \cite[Lemma
8.1]{bv}, there exists an open cover $\mathcal U$ of $X$ satisfying
the following condition: if $\alpha\colon X\to K$ is a $\mathcal
U$-map into a paracompact space $K$ (i.e., $\alpha^{-1}(\omega)$
refines $\mathcal U$ for some open cover $\omega$ of $K$), then
there exists a map $q\colon G\to\mathbb I^{n}$, where $G$ is an open
neighborhood of $\overline{\alpha(X)}$ in $K$, such that $g_0$ and
$q\circ\alpha$ are $\epsilon/2$-close with respect to the metric
$\rho$. Let $\mathcal U_1$ be an open cover of $X$ refining both
$\mathcal U$ and $\omega_m$ such that $\inf\{\epsilon(x):x\in U\}>0$
for all $U\in\mathcal U_1$.

Since $\dim f\triangle g\leq 0$ for some $g\in C(X,\mathbb I^{n})$,
according to \cite[Theorem 6]{bv} there exists an open cover
$\mathcal V$ of $Y$ such that for any $\mathcal V$-map $\beta\colon
Y\to L$ into a simplicial complex $L$ we can find a $\mathcal
U_1$-map $\alpha\colon X\to K$ into a simplicial complex $K$ and a
perfect $PL$-map $p\colon K\to L$ with $\beta\circ f=p\circ\alpha$
and $\dim p\leq n$. We can assume that $\mathcal V$ is locally
finite. Take $L$ to be the nerve of the cover $\mathcal V$ and
$\beta\colon Y\to L$ the corresponding natural map. Then there exist
a simplicial complex $K$ and maps $p$ and $\alpha$ satisfying the
above conditions. Hence, the following diagram is commutative.

$$
\begin{array}{cccccccc}
X& \stackrel{\alpha}{\longrightarrow}& K
\\
\vcenter{%
\rlap{$f$}}\;\;\;\Big\downarrow&
 &\Big\downarrow\vcenter{%
\rlap{$p$}} &
\\
Y& \stackrel{\beta}{\longrightarrow}&
 L
\end{array}
$$

Since $K$ is paracompact, the choice of the cover $\mathcal U$
guarantees the existence of a map $\varphi\colon G\to\mathbb I^n $,
where $G\subset K$ is an open neighborhood of
$\overline{\alpha(X)}$, such that $g_0$ and $h_0=\varphi\circ\alpha$
are $\epsilon/2$-close with respect to $\rho$. Replacing the
triangulation of $K$ by a suitable subdivision, we may additionally
assume that no simplex of $K$ meets both $\overline{\alpha(X)}$ and
$K\backslash G$. So, the union $N$ of all simplexes $\sigma\in K$
with $\sigma\cap\overline{\alpha(X)}\neq\varnothing$ is a subcomplex
of $K$ and $N\subset G$. Moreover, since $N$ is closed in $K$,
$p_N=p|N\colon N\to L$ is a perfect map. Therefore, we have the
following commutative diagram:

\begin{picture}(120,95)(-100,0)
\put(30,10){$L$} \put(0,30){$Y$} \put(12,28){\vector(3,-2){18}}
\put(14,14){\small $\beta$} \put(1,70){$X$}
\put(5,66){\vector(0,-1){25}} \put(-1,53){\small $f$}
\put(11,73){\vector(1,0){45}} \put(30,77){\small $h_0$}
\put(12,68){\vector(3,-2){18}} \put(15,56){\small $\alpha$}
\put(31,50){$N$} \put(35,46){\vector(0,-1){25}}
 \put(37,33){\small $p_N$}
\put(46,58){\vector(4,3){13}} \put(44,64){\small $\varphi$}
 \put(60,70){$\mathbb I^n$}
\end{picture}

Using that $\alpha$ is a $\mathcal U_1$-map and
$\inf\{\epsilon(x):x\in U\}>0$ for all $U\in\mathcal U_1$, we can
construct a continuous function $\epsilon_1:N\to(0,1]$ and an open
cover $\gamma$ of $N$ such that $\epsilon_1\circ\alpha\leq\epsilon$
and $\alpha^{-1}(\gamma)$ refines $\mathcal U_1$. Since $\dim
p_N\leq\dim p\leq n$ and $L$, being a simplicial complex, is a
$C$-space, we can apply \cite[Theorem 2.2]{tv} to find a map
$\varphi_1\in C_{(\gamma,0)}(N,\mathbb I^n,p_N)$ which is
$\epsilon_1/2$-close to $\varphi$. Let $h=\varphi_1\circ\alpha$.
Then $h$ and $h_0$ are $\epsilon/2$-close because
$\epsilon_1\circ\alpha\leq\epsilon$. On the other hand, $h_0$ is
$\epsilon/2$-close to $g_0$. Hence, $g_0$ and $h$ are
$\epsilon$-close.

It remains to show that $h\in C_{(\omega_m,0)}(X,\mathbb I^{n},f)$.
To this end, fix  a point $z=(f(x),h(x))\in (f\triangle h)(X)\subset
Y\times\mathbb I^n$ and let $y=f(x)$. Then
$w=(p_N\triangle\varphi_1)(\alpha(x))=(\beta(y),h(x))$. Since
$\varphi_1\in C_{(\gamma,0)}(N,\mathbb I^n,p_N)$, there exists a
neighborhood $V_w$ of $w$ in $L\times\mathbb I^n$ such that
$W=(p_N\triangle\varphi_1)^{-1}(V_w)$ is a union of a disjoint open
family in $N$ refining $\gamma$. We can assume that
$V_w=V_{\beta(y)}\times V_{h(x)}$, where $V_{\beta(y)}$ and
$V_{h(x)}$ are neighborhoods of $\beta(y)$ and $h(x)$ in $Y$ and
$\mathbb I^n$, respectively. Consequently, $(f\triangle
h)^{-1}(\Gamma)=\alpha^{-1}(W)$, where
$\Gamma=\beta^{-1}\big(V_{\beta(y)}\big)\times V_{h(x)}$. Finally,
observe that $\alpha^{-1}(W)$ is a disjoint union of an open in $X$
family refining $\omega_m$. Therefore, $h\in
C_{(\omega_m,0)}(X,\mathbb I^{n},f)$. \hfill$\square$

\textit{Proof of Theorem $1.3(i)$.} Let $\lambda$ and $\omega_k$ be
as in the proof of Theorem 1.3(i). Denote by $C_{\omega_k}(X,\mathbb
I^{\aleph_0},f)$ the set of all $g\in C(X,\mathbb I^{\aleph_0})$
such that $f\triangle g$ is an $\omega_k$-map. It can be shown that
every $C_{\omega_k}(X,\mathbb I^{\aleph_0},f)$ is open in
$C(X,\mathbb I^{\aleph_0})$ with the source limitation topology (see
\cite[Proposition 3.1]{tv1}). Moreover, $\bigcap_{k\geq
1}C_{\omega_k}(X,\mathbb I^{\aleph_0},f)$ consists of maps $g$ with
$f\triangle g$ embedding $X$ into $Y\times\mathbb I^{\aleph_0}$. So,
we need to show that each $C_{\omega_k}(X,\mathbb I^{\aleph_0},f)$
is dense in $C(X,\mathbb I^{\aleph_0})$ equipped with the source
limitation topology.

To prove this fact we follow the notations and the arguments from
the proof of Theorem 1.3(ii) (that $C_{(\omega_k,0)}(X,\mathbb
I^{n},f)$ are dense in $C(X,\mathbb I^{n})$) by considering $\mathbb
I^{\aleph_0}$ instead of $\mathbb I^n$. We fix a cover $\omega_m$, a
map $g_0\in C(X,\mathbb I^{\aleph_0})$ and a function $\epsilon\in
C(X,(0,1])$. Since $W(f)\leq\aleph_0$, we can apply Theorem 6 from
\cite{bv} to find an open cover $\mathcal V$ of $Y$ such that for
any $\mathcal V$-map $\beta\colon Y\to L$ into a simplicial complex
$L$ there exists a $\mathcal U_1$-map $\alpha\colon X\to K$ into a
simplicial complex $K$ and a perfect $PL$-map $p\colon K\to L$ with
$\beta\circ f=p\circ\alpha$. Proceeding as before, we find a map
$h=\varphi_1\circ\alpha$ which is $\epsilon$-close to $g_0$, where
$\varphi_1\in C_{\gamma}(N,\mathbb I^{\aleph_0},p_N)$. It is easily
seen that $\varphi_1\in C_{\gamma}(N,\mathbb I^{\aleph_0},p_N)$
implies $h\in C_{\omega_m}(X,\mathbb I^{\aleph_0},f)$. So,
$C_{\omega_m}(X,\mathbb I^{\aleph_0},f)$ is dense in $C(X,\mathbb
I^{\aleph_0})$. \hfill$\square$

\textit{Proof of Corollary $1.5$.} It follows from \cite[Proposition
2.1]{bv1} that the set $E$ is $G_\delta$ in $C(X,\mathbb
I^{n+1}\times M)$. So, we need to show it is dense in $C(X,\mathbb
I^{n+1}\times M)$. To this end, we fix $g^0=(g_1^0,g_2^0)\in
C(X,\mathbb I^{n+1}\times M)$ with $g_1^0\in C(X,\mathbb I^{n+1})$
and $g_2^0\in C(X,M)$. Since, by Corollary 1.4, the set
$$G_1=\{g_1\in C(X,\mathbb I^{n+1}):\dim f\triangle g_1\leq 0\}$$ is
dense in $C(X,\mathbb I^{n+1})$, we may approximate $g_1^0$ by a map
$h_1\in G_1$. Then, according to \cite[Corollary 1.1]{bv1}, the maps
$g_2\in C(X,M)$ with $\dim g_2\big((f\triangle h_1)^{-1}(z)\big)=0$
for all $z\in Y\times\mathbb I^{n+1}$ form a dense subset $G_2$ of
$C(X,M)$. So, we can approximate $g_2^0$ by a map $h_2\in G_2$. Let
us show that the map $h=(h_1,h_2)\in C(X,\mathbb I^{n+1})\times M$
belongs to $E$. We define the map $\pi_h\colon (f\triangle h)(X)\to
(f\triangle h_1)(X)$,
$\pi_h\big(f(x),h_1(x),h_2(x)\big)=\big(f(x),h_1(x)\big)$, $x\in X$.
Because $f$ is perfect, so is $\pi_h$. Moreover,
$(\pi_h)^{-1}\big(f(x),h_1(x)\big)=h_2\big(f^{-1}(f(x))\cap
h_1^{-1}(h_1(x))\big)$, $x\in X$. So, every fiber of $\pi_h$ is
$0$-dimensional. We also observe that
$\pi_h\big(h(f^{-1}(y))\big)=(f\triangle h_1)(f^{-1}(y))$ and the
restriction $\pi_h|h(f^{-1}(y))$ is a perfect surjection between the
compact spaces $h(f^{-1}(y))$ and $(f\triangle h_1)(f^{-1}(y))$ for
any $y\in Y$. Since $(f\triangle
h_1)(f^{-1}(y))\subset\{y\}\times\mathbb I^{n+1}$, $\dim (f\triangle
h_1)(f^{-1}(y))\leq n+1$, $y\in Y$. Consequently, applying the
Hurewicz's dimension-lowering theorem \cite{e} for the  map
$\pi_h|h(f^{-1}(y))$, we have $\dim h(f^{-1}(y))\leq n+1$.
Therefore, $h\in E$, which completes the proof. \hfill$\square$



\bibliographystyle{amsplain}

\end{document}